\newtheorem{lemma}{Lemma}
\newtheorem{thm}{Theorem}
\newtheorem{cor}{Corollary}
\newcommand{\E}[0]{\ensuremath \mathbb{E}}
\newcommand{\R}[0]{\ensuremath \mathbb{R}}
\newcommand{\e}{\varepsilon}
\newcommand{\ip}[2]{\langle #1, #2 \rangle}
\newcommand{\Var}{\mathrm{Var}}
\newcommand{\ones}{\bm{1}}
\newcommand{\OPT}{\textrm{OPT}\xspace}
\newcommand{\vol}{\textrm{vol}}
\renewcommand{\c}{c}
\newcommand{\A}{A}
\newcommand{\T}{\mathcal{T}}
\newcommand{\I}{\mathcal{I}}
\newcommand{\LP}{\textrm{LP}}
\newcommand{\IP}{\textrm{IP}}
\newcommand{\poly}{\text{poly}}
\newcommand{\mom}[1]{{\left\vert\kern-0.25ex\left\vert\kern-0.25ex\left\vert #1 \right\vert\kern-0.25ex\right\vert\kern-0.25ex\right\vert}}
\newcommand{\algmargin}{\the\ALG@thistlm}
\newlength{\whilewidth}
\algnewcommand{\parState}[1]{\State%
  \parbox[t]{\dimexpr\linewidth-\algmargin}{\strut #1\strut}}
\newcounter{mynotes}
\newcommand{\bS}{\mathbb{S}}
\newcommand{\cst}{\text{cst}}
\newcommand{\pareto}{\Delta_{\text{rc}}}
\newcommand{\gap}{\textsc{IPGap}\xspace}
\DeclareMathOperator{\argmax}{argmax}
\DeclareMathOperator{\argmin}{argmin}
\title{Branch-and-Bound Solves Random Binary IPs in Polytime \footnote{The contributions of this manuscript overlap with those of \cite{dey2021branch}. The differences are noted in the final paragraph of Section \ref{sec:intro}.}}
\author[1]{Santanu S. Dey\thanks{santanu.dey@isye.gatech.edu}}
\author[1]{Yatharth Dubey\thanks{yatharthdubey7@gatech.edu}}
\author[2]{Marco Molinaro\thanks{molinaro@inf.puc-rio.br}}
\affil[1]{School of Industrial and Systems Engineering, Georgia Institute of Technology}
\affil[2]{Computer Science Department, Pontifical Catholic University of Rio de Janeiro}
\date{}
\begin{document}
\maketitle

\begin{abstract}
Branch-and-bound is the workhorse of all state-of-the-art mixed integer linear programming (MILP) solvers. These implementations of branch-and-bound typically use variable branching, that is, the child nodes are obtained by fixing some variable to $0$ in one node and to $1$ in the other node. Even though modern MILP solvers are able to solve very large-scale instances efficiently, relatively little attention has been given to understanding why the underlying branch-and-bound algorithm performs so well. In this paper, our goal is to theoretically analyze the performance of the standard variable branching based branch-and-bound algorithm. In order to avoid the exponential worst-case lower bounds, we follow the common idea of considering random instances. More precisely, we consider random integer programs where the entries of the coefficient matrix and the objective function are randomly sampled. 

Our main result is that with good probability branch-and-bound with variable branching explores only a polynomial number of nodes to solve these instances, for a fixed number of constraints. To the best of our knowledge this is the first known such result for a standard version of branch-and-bound. We believe that this result provides an 
indication as to why branch-and-bound with variable branching works so well in practice. 
\end{abstract}


\section{Introduction}\label{sec:intro}
The branch-and-bound algorithm, first  proposed by Land and Doig in~\cite{land1960automatic}, is the workhorse of all modern  state-of-the-art mixed integer linear programming (MILP) solvers. As is well known, the branch-and-bound algorithm searches the solution space by recursively partitioning it. The progress of the algorithm is monitored by maintaining a ``tree''. Each node of the tree corresponds to a linear program (LP) solved, and in particular, the root node corresponds to the LP relaxation of the integer program. After solving the LP corresponding to a node, the feasible region of the LP  is partitioned into two subproblems (which correspond to the child nodes of the given node), so that the fractional optimal solution of the LP is not included in either subproblem, but any integer feasible solution contained in the feasible region of the LP is included in one of the two subproblems. This is accomplished by adding an inequality of the form $\pi x \leq \pi_0$ to first subproblem and the inequality $\pi^{\top}x \geq \pi_0 + 1$ to the second subproblem, where $\pi$ is an integer vector and $\pi_0$ is an integer scalar. The process of partitioning at a node stops if (i) the LP at the node is infeasible, (ii) the LP's optimal solution is integer feasible, or (iii) the LP's optimal objective function value is worse than an already known integer feasible solution.  These three conditions are sometimes referred to as the rules for pruning a node. The algorithm terminates when there are no more ``open nodes" to process, i.e. all nodes have been pruned. A branch-and-bound algorithm is completely described by fixing a rule for partitioning the feasible region at each node and a rule for selecting which open node should be solved and branched on next.  See ~\cite{wolsey1999integer,conforti2014integer} for more discussion on the branch-and-bound algorithm. 

In 1983, Lenstra~\cite{lenstra1983integer} showed that general integer programs can be solved in polynomial time in fixed dimension. This algorithm, which is essentially a branch-and-bound algorithm, uses tools from geometry of numbers, in particular the lattice basis reduction algorithm~\cite{lenstra1982factoring} to decide on $\pi$ for partitioning the feasible region. Pataki et al.~\cite{pataki2010basis} proved that most random packing integer programs can be solved at the root-node using a partitioning scheme similar to the one proposed by Lenstra~\cite{lenstra1983integer}. While there are some implementations of such general partitioning rules~\cite{aardal2000market}, all state-of-the-art solvers use a much simpler (and potentially significantly weaker and restrictive) partitioning rule for solving binary IPs, namely:  If $x_j$ is fractional in the optimal solution of the LP of a given node, then one child node is obtained by the addition of the constraint $x_j \leq 0$ and the other with the constraint $x_j \geq 1$ (i.e., $\pi = e_j$, the unit vector in direction $j$ for some $j$ in $\{1, \dots, n\}$). The rule for deciding how to partition the feasible region at a node then reduces to choosing which fractional variable should be branched on. This kind of partitioning rule is henceforth referred to as \emph{variable branching}.

As mentioned above, all state-of-the-art MILP solvers use variable branching, which has proven itself to be very successful in practice~\cite{bixby2007progress}. Part of this success could be attributed to the fact that variable branching helps maintain the sparsity structure of the original LP relaxation, which can help in solving LPs in the branch-bound-tree faster (see~\cite{coleman1,walter2014sparsity,dey2015approximating,dey2018analysis}). Additionally, while in the worst-case there can be exponentially many nodes in the branch-and-bound tree  (see ~\cite{chvatal1980hard,dash2002exponential} for explicit examples for variable branching based branch-and-bound), a major reason for its success is that in practice the size of the tree can be quite small~\cite{steffyS17}. To the best of our knowledge there is no theoretical study of branch-and-bound algorithm using variable branching that attempts to explain its incredible success in practice. 

In order to avoid worst-case lower bounds, a standard idea is to consider random instances. A famous example is the study of smoothed analysis for the simplex method~\cite{spielman2004smoothed}. In this paper, we provide what seems to be the first analysis of the branch-and-bound algorithm with variable branching for a set of random instances. More precisely, we consider problems of the form 
  \begin{align}
        \max &\quad \ip{\c}{x} \notag\\
        \text{s.t.} &\quad \A x  \leq b   \tag{$\IP$}\\
        &\quad x \in \{0,1\}^n. \notag
    \end{align}
  By a \textbf{random instance} of $\IP$ we mean one where we draw the entries of the constraint matrix $A \in \mathbb{R}^{m \times n}$ ($m \ll n$) and the objective vector $c$ uniformly from $[0,1]$ independently.  For the right-hand-side, we will use $b_j = \beta_j \cdot n$, where $\beta_j \in (0, \frac{1}{2})$ for $j \in \{1, \dots, m\}$. (The case when $\beta_j$ is high is less interesting since all items fit with probability 1 as $n \rightarrow \infty$ and $m$ is fixed.) We show that if the number of constraints $m$ is fixed, then the branch-and-bound tree with variable branching has at most polynomial number of nodes with good probability. More precisely, we show the following result.

	\begin{thm} \label{thm:main}
			Consider a branch-and-bound algorithm using the following rules:
\begin{itemize}
\item Partitioning rule: Variable branching, where any fractional variable can be used to branch on.
\item Node selection rule:  Select a node with largest LP value as the next node to branch on. 
\end{itemize}
	Consider $n \ge m+1$ and a random instance of the problem $\IP(b)$ where $b_j = \beta_j \cdot n$ and $\beta_j \in (0,1/2)$ for $j \in \{1,\ldots,n\}$. Then with probability at least $1 - \frac{1}{n} - 2^{-\alpha \bar{a}_2}$, the branch-and-bound algorithm applied to this random instance produces a tree with at most $$n^{\bar{a}_1 \cdot (m + \alpha \log m)}$$ nodes for all $\alpha \le \min\{30 m\,,\, \frac{\log n}{\bar{a}_2}\}$, where $\bar{a}_1$ and $\bar{a}_2$ are constant depending only on $m$ and $\beta$.
		\end{thm}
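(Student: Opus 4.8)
The plan is to reduce the size of the entire branch-and-bound tree to a count over a much more structured family of nodes, and then to bound that count using LP duality together with the randomness of the data. Write $z_{\LP}$ for the root LP value and $z_{\IP}$ for the optimal value of $\IP(b)$, and call a node \emph{surviving} if its LP value strictly exceeds $z_{\IP}$. Since fixing a variable only shrinks the feasible region, the LP value is monotone non-increasing along every root-to-leaf path, so the surviving nodes form a subtree containing the root. First I would argue that under best-bound node selection every node the algorithm actually branches on has LP value at least $z_{\IP}$: the subtree containing an optimal integer solution always has an open node of LP value $\ge z_{\IP}$, and best-bound never selects a node of smaller bound while such a node is open. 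Because the data is continuous, exact ties have probability zero, so every branched node is surviving (the value-$z_{\IP}$ leaf being integral is never branched), and the total number of nodes is at most a constant times the number of surviving nodes. A key deterministic ingredient here is that the gap is small: a basic optimal LP solution on $n$ box constraints and $m$ packing constraints has at most $m$ fractional coordinates, and rounding these down stays feasible while losing at most $m$ in objective (each coordinate is $<1$ and each $\c_i\le 1$), so $z_{\LP}-z_{\IP}\le m$.

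Next I would give a duality certificate characterizing surviving nodes. Let $\lambda^\ast\in\R^m_{\ge 0}$ be an optimal dual for the root LP, let $\A_i$ denote the $i$-th column, and set the reduced values $r_i=\c_i-\ip{\lambda^\ast}{\A_i}$, writing $r_i^+=\max(r_i,0)$ and $r_i^-=\max(-r_i,0)$. Dualizing the packing constraints with the (dual-feasible) root multipliers $\lambda^\ast$ at any node with variables fixed to $0$ on a set $F_0$ and to $1$ on a set $F_1$ yields
\[
 z_{\LP}-z_{\LP}(\text{node})\ \ge\ \sum_{i\in F_0} r_i^+\ +\ \sum_{i\in F_1} r_i^-.
\]
Combining this with the reduction and the gap bound, every surviving node satisfies $\sum_{i\in F_0} r_i^+ + \sum_{i\in F_1} r_i^- < m$. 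In other words, only \emph{borderline} items (those with $|r_i|$ small) may be fixed against the LP's preference, correct fixings cost nothing in this budget, and the total reduced value of the wrong fixings is capped by the integrality gap.

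The randomness of $\A$ and $\c$ then enters to turn this budget constraint into a polynomial count. For a fixed $\lambda$, the values $\c_i-\ip{\lambda}{\A_i}$ are independent with density bounded by an absolute constant near $0$, so the number of items with $|r_i|\le t$ concentrates around $O(nt)$; the deviation parameter in this concentration is exactly the source of $\alpha$ and of the failure term $2^{-\alpha \bar{a}_2}$. Since $\lambda^\ast$ is itself random, I would discretize the bounded dual region on a fine net and union-bound over it, paying only a polynomial factor (this is where the $\tfrac1n$ term appears). Feeding the resulting control on the reduced-value profile into the budget constraint above, and using that branching occurs only on fractional—hence borderline—variables so that every reachable surviving configuration is pinned down by few essential choices, one bounds the number of surviving nodes, and hence the tree, by $n^{\bar{a}_1\,(m+\alpha\log m)}$.

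The hard part is this last step, namely decoupling the branch-and-bound dynamics from the random data. Two difficulties compound. First, the dual, and hence the reduced-value threshold, drifts from node to node as variables are fixed, so the clean root-dual certificate must be shown to keep all branching localized to a thin band of borderline items even after many fixings. Second, the number of surviving nodes is \emph{not} simply the number of low-weight fixing sets—of which there can be super-polynomially many—but only those reachable by successively branching on fractional variables, so the reachability structure of the tree must be exploited to avoid a $2^{\Theta(\sqrt n)}$ overcount. Managing the random dual uniformly through the net/union-bound while keeping the overall failure probability at $\tfrac1n+2^{-\alpha \bar{a}_2}$ is the most delicate component of the argument.
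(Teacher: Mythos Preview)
Your reduction from the whole tree to the ``surviving'' branched nodes via best-bound is correct, and so is the root-dual certificate
\[
z_{\LP}-z_{\LP}(\textrm{node})\ \ge\ \sum_{i\in F_0} r_i^+\ +\ \sum_{i\in F_1} r_i^-,
\]
which is essentially the paper's Lemma~\ref{lemma:hypDist} specialized to the root dual. The genuine gap is the integrality-gap bound you feed into it. You use the deterministic $z_{\LP}-z_{\IP}\le m$, correctly note that the resulting budget admits $2^{\Theta(\sqrt{n})}$ low-weight fixing sets, and then propose to recover a polynomial count from ``reachability by branching on fractional variables.'' This last step is where the argument breaks. Reachability gives you only that each branched node has an LP optimum with at most $m$ fractional coordinates; the paper exploits exactly this (Lemmas~\ref{lem:counting_with_intsolns}--\ref{lem:no_repeated_hulls}) to inject branched nodes into pairs $(x,J)$ with $x$ a ``good'' 0/1 point and $|J|\le m$, but that injection only yields $|G|\cdot\binom{n}{\le m}$ nodes. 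You still have to bound $|G|$, and with budget $m$ the set $G$ is itself of size $2^{\Theta(\sqrt{n})}$. There is no further tree-structural argument in the paper (nor an obvious one) that prunes this.

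What actually closes the gap is a much tighter, \emph{probabilistic} bound on $z_{\LP}-z_{\IP}$: the Dyer--Frieze result (Theorem~\ref{thm:gap}) gives $\gap(b)\le \alpha a_1 \log^2 n/n$ with failure probability $2^{-\alpha a_2}$, and this is where the $2^{-\alpha\bar a_2}$ term in the statement comes from (not from the slab concentration, as you suggest). With this budget, setting $C=\tfrac{n}{\log n}\gap(b)=O(\alpha\log n)$, the bucketed count $\prod_\ell \binom{|J_\ell|}{\le C/2^\ell}$ becomes polynomial once you control $|J_\ell|\lesssim m\,2^\ell\log n$ uniformly over all hyperplanes $H(\lambda)$. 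That uniform slab bound (Lemma~\ref{lemma:uniform}) is the source of the $\tfrac1n$ failure term, and it must hold over \emph{all} duals simultaneously---not just a net around the random root dual---because good points across different slices $b'$ are governed by different $\lambda$'s (whence the hyperplane-arrangement count of Lemma~\ref{lemma:hypArrange}). In short: replace your deterministic $\gap\le m$ by Dyer--Frieze, drop the reachability rescue, and do the bucketed count over all dual-based partial solutions; the rest of your outline then lines up with the paper's proof.
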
	
		
We note that the node selection rule used here is called the \emph{best-bound method} in the literature and often used in practice with minor modifications~\cite{linderoth1999computational}. This node selection rule is known to guarantee the smallest tree for any fixed partitioning rule~\cite{wolsey1999integer}. Also notice that Theorem~\ref{thm:main} does not specify a rule for selecting a fractional variable to branch on and therefore even ``adversarial'' choices lead to a polynomial sized tree with good probability. This indicates that the tree is likely to be even smaller when one uses a ``good'' variable selection rule, such as \emph{strong branching}~\cite{achterberg2005branching}. Another reason for the size of the tree to be even smaller in practice is
that Theorem \ref{thm:main} relies only on rule (iii) of pruning, i.e., pruning by bounds, to bound the size of the tree. However, pruning may also occur due to rules (i) (LP infeasibility) or (ii) (integer optimality), thus leading to a
smaller tree size than predicted by Theorem~\ref{thm:main}. 

Also notice that while $\IP$ is written in packing form (e.g., $A$ is non-negative), our bounds work for every deterministic binary IP that is ``well-behaved'', as discussed in Section \ref{sec:remark}. Together with the above observations, we believe Theorem~\ref{thm:main} provides compelling indication of why branch-and-bound with variable branching works so well in practice.   

Finally, we note that random (packing) instances have been considered in several previous studies, and it has been shown that they can be solved in polynomial time with high probability. As mentioned earlier, Pataki et al.~\cite{pataki2010basis} proves that random packing integer programs can be solved in polynomial-time; however it uses the very heavy machinery of lattice basis reduction, which is not often used in practice. 
Other papers considering random packing problems present algorithms that are custom-made enumeration-based schemes that are \textbf{not} equivalent to the general purpose branch-and-bound algorithm. In particular, Lueker~\cite{lueker} showed that the additive integrality gap for random one-row (i.e. $m = 1$) knapsack instances is $O\left(\frac{\log^2 n}{n}\right)$, and using this property Goldberg and Marchetti-Spaccamela~\cite{goldbergSpaccamela} presented a polynomial-time enumeration algorithm for these instances. Beier and Vocking~\cite{beierVockingKnapSTOC,beierVockingKnapSODA} showed that the so-called knapsack core algorithm with suitable improvements using enumeration runs in polynomial-time with high probability. Finally, Dyer and Frieze~\cite{dyerFriezeKnap} generalize the previous results on integrality gap and enumeration techniques to the random instances we consider here.  

There has also been follow-up work to the results presented here. Borst et al. \cite{borst2021integrality} show that, when the data is drawn from a standard Gaussian (as opposed to the uniform distribution studied here), the integrality gap is sufficiently small that branch-and-bound solves problems of this type in $n^{\poly(m)}$ nodes. Frieze \cite{frieze2020expected} shows that any branch-and-bound tree solving the asymmetric travelling salesperson problem via the assignment problem relaxation has an exponential number of nodes.





The rest of the paper is organized as follows. Section~\ref{sec:pre} presents notation, formalizes the set-up and presents some preliminary results needed. Section~\ref{sec:goodsol} establishes a key result that the size of branch-and-bound tree  can be bounded if one can bound all possible ``near optimal" solutions for $\IP(b)$ for varying values of $b$. Section~\ref{sec:geometry} and Section~\ref{sec:distance} build up machinery to prove that the number of ``near optimal" solutions is bounded by a polynomial. Section~\ref{sec:prthm} completes the proof of Theorem~\ref{thm:main}. {\color{black} In Section~\ref{sec:remark}, we present an upper bound on the size of branch-and-bound tree for general IPs.}

{\color{black} An earlier version of this paper is published in the conference SODA~\cite{dey2021branch}. The current paper is a significant improvement over~\cite{dey2021branch}.  Although the main result (Theorem~\ref{thm:main}) is the same, we have significantly simplified the proofs and sharpened the key take-away result in Section~\ref{sec:remark} that can be applied to obtain upper bounds on the size of branch-and-bound tree for general IP models: see Corollary~\ref{cor:final_rem}. Comparing with~\cite{dey2021branch}, we improve this result by reducing the upper bound on size of branch-and-bound trees by a factor of  $O(n^{m})$. Also the analysis in Section~\ref{sec:goodsol} and Section~\ref{sec:geometry} are revamped and simplified as compared the original paper~\cite{dey2021branch}.}

	\section{Preliminaries}\label{sec:pre}
	    
  \subsection{Branch-and-bound}

	Even though the general branch-and-bound algorithm was already described in the introduction, we describe it again here for maximization-type 0/1 IPs and using variable branching as partitioning rule and best-bound as node selection rule, for a clearer mental image. This is what we will henceforth refer to as \emph{the branch-and-bound} (BB) algorithm. 
	
	The algorithm constructs a tree $\T$ where each node has an associated LP; the LP relaxation of original integer program is the LP of the root node. In each iteration the algorithm:
	
	\begin{enumerate}
		\item (Node selection) Selects an unpruned leaf $N$ with highest optimal LP value in the current tree $\T$, and obtains an optimal solution $\tilde{x}$ to this LP.
		
		\item (Pruning by integrality) If $\tilde{x}$ is integral, and hence feasible to the original IP, and has higher value than the current best such feasible solution, it sets $\tilde{x}$ as the current best feasible solution. The node $N$ is marked as \emph{pruned by integrality}.
		
		\item (Pruning by infeasibility/bound) Else, if the LP is infeasible or its value is worse than the value of the current best feasible solution, the node $N$ is marked as \emph{pruned by infeasibility/bound}.
		
		\item (Branching) Otherwise it selects a coordinate $j$ where $\tilde{x}$ is fractional and adds two children to $N$ in the tree: on one of them it adds the constraint $x_j = 0$ to the LP of $N$, and on the other it adds the constraint $x_j = 1$ instead.   
	\end{enumerate}

 One simple but important property of this best-bound node selection rule is the following. (Note that we do not assume that an optimal IP solution is given in the beginning of the procedure: the algorithm starts with no current best feasible solution, which are only found in step \emph{Pruning by integrality}.)

    \begin{lemma}[best-bound node selection] \label{lemma:pruning}
    	The execution of branch-and-bound with best-bound node selection rule never branches on a node whose LP value is worse than the optimal value of the original IP.
    \end{lemma}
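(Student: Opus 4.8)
The plan is to fix one optimal solution $x^*$ of the original IP, write $z^* = \langle c, x^* \rangle$ for its (optimal) value, and argue by contradiction: suppose that at some iteration the algorithm branches on a node $N$ whose LP value $z_N$ satisfies $z_N < z^*$. First I would record two monotone facts that hold throughout the whole execution. (i) The value of the current best feasible solution is always at most $z^*$, since it is the value of some feasible $0/1$ point and $z^*$ is the optimum. (ii) Any node $M$ whose LP feasible region contains $x^*$ has $z_M \ge z^*$, because $x^*$ is LP-feasible there and attains objective value $z^*$, so the LP maximum is at least $z^*$.

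Next I would track the unique leaf of the current tree that contains $x^*$ in its feasible region. Since each branching fixes one coordinate to $0$ on one child and to $1$ on the other, and $x^*$ is integral, $x^*$ satisfies the added constraint of exactly one of the two children at every branch; following this path from the root shows that at each iteration there is exactly one leaf $N'$ whose feasible region contains $x^*$. The key step is to rule out two of the three ways $N'$ could have been pruned. It cannot have been pruned by infeasibility, because $x^*$ is feasible for its LP. It cannot have been pruned by bound either: by fact (ii) we have $z_{N'} \ge z^*$, while by fact (i) the incumbent value never exceeds $z^*$, so $z_{N'}$ is never strictly worse than the current best feasible value.

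With these invariants in place the case analysis closes quickly. If, at the iteration where we branch on $N$, the leaf $N'$ is unpruned, then $N'$ is an available leaf with $z_{N'} \ge z^* > z_N$, contradicting the best-bound rule, which selects the unpruned leaf of largest LP value. If instead $N'$ has already been pruned, the only remaining possibility is pruning by integrality; but then the integral LP optimum of $N'$ has value both $\ge z^*$ (by fact (ii)) and $\le z^*$ (by optimality, since it is a feasible integer solution), hence exactly $z^*$, so from that earlier iteration onward the incumbent value equals $z^*$. Consequently $N$, whose value $z_N < z^*$ is strictly worse than this incumbent, would itself be pruned by bound and could never be branched on, again a contradiction.

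I expect the only delicate point to be the bookkeeping rather than any deep idea: one must verify carefully that a unique leaf always contains $x^*$ and that the incumbent value is nondecreasing and capped at $z^*$. Once these two invariants are established, the lemma follows directly from the case distinction on how the leaf containing $x^*$ could have been pruned.
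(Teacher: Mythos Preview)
Your proposal is correct and follows essentially the same approach as the paper: you track an optimal integer solution $x^*$ through the tree and argue that either some unpruned leaf still contains it (so best-bound would select that leaf instead), or it has been found via pruning by integrality (so the incumbent equals $z^*$ and $N$ would be pruned by bound). The paper's proof is the same dichotomy, stated more tersely.
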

    
    \begin{proof}
    	Let $\IP^*$ denote the optimal value of the original IP. Notice that throughout the execution, either:
    	
    	\begin{enumerate}
    		\item  The current best feasible solution has value $\IP^*$ (i.e. an optimal integer solution has been found)
    		
    		\item The LP of an unpruned leaf contains an optimal integer solution, and so this LP value is at least as good as $\IP^*$. 
    	\end{enumerate}
    	This means that in every iteration, the algorithm cannot \emph{select} and \emph{branch} on a leaf with LP value strictly less than $\IP^*$: in Case 2 such leaf is not selected (due to the best-bound rule), and in Case 1 such leaf is pruned by infeasibility/bound (and hence not branched on) if selected. 
    \end{proof}
    

	\subsection{(Random) Packing problems}

 We will use the following standard observation on the number of fractional coordinates in an optimal solution of the LP relaxation of every instance of $\IP$ (see for example Section 17.2 of \cite{vazirani}).
 	
	\begin{lemma}\label{lemma:basicSol}
    Consider an instance of $\IP$. Then every LP in the BB tree for this instance has an optimal solution with at most $m$ fractional coordinates.
  \end{lemma}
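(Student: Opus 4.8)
The plan is to use the standard basic-solution (vertex) characterization of LP optima together with a counting argument on active constraints. First I would observe that the LP associated with any node of the BB tree has the form
\begin{equation*}
\max\ \ip{\c}{x}\quad\text{s.t.}\quad \A x \le b,\ \ 0\le x\le 1,\ \ x_j = 0 \text{ or } x_j = 1 \text{ for } j \in F,
\end{equation*}
where $F\subseteq\{1,\dots,n\}$ is the set of variables fixed by the branching constraints along the path from the root to this node. The crucial point is that each branching constraint is simply a tightening of one of the box constraints $0\le x_j\le 1$ to an equality, so the feasible region is a polytope defined entirely by the $m$ packing inequalities together with box-type constraints on the coordinates. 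In particular it is bounded (contained in $[0,1]^n$), so if the LP is feasible its optimum is attained at a vertex, i.e.\ a basic feasible solution $\tilde{x}$; if the LP is infeasible the statement is vacuous.

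Next I would invoke the standard fact that a vertex of a polytope in $\R^n$ has $n$ linearly independent active (tight) constraints. The constraints available to be tight are of two kinds: the $m$ packing constraints $\A x\le b$, and the box/branching constraints of the form $x_j\ge 0$, $x_j\le 1$, or $x_j=0/1$. The key observation is that a coordinate $j$ with $0<\tilde{x}_j<1$ (a \emph{fractional} coordinate) makes \emph{none} of its associated box or branching constraints tight, and in particular such a $j$ cannot be in $F$.

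Let $k$ denote the number of fractional coordinates of $\tilde{x}$. I would then count: the tight box/branching constraints involve only the $n-k$ non-fractional coordinates, and each such coordinate contributes at most one tight constraint, giving at most $n-k$ of them; together with at most $m$ tight packing constraints, the total number of active constraints is at most $n-k+m$. Since a vertex requires $n$ linearly independent active constraints, we get $n \le n-k+m$, hence $k\le m$, which is exactly the claim. The argument is elementary and I do not anticipate a genuine obstacle; the only point requiring care is the bookkeeping that recognizes the branching constraints as box-type equalities, so that they only serve to \emph{fix} coordinates (reducing the count of potential fractional coordinates) rather than adding to the $m$ packing constraints in the counting bound.
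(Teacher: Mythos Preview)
Your proposal is correct and follows essentially the same argument as the paper's proof sketch: both observe that the node LP has a bounded feasible region, take a vertex optimum, and then count that among the $n$ linearly independent active constraints at a vertex, at most $m$ can come from $Ax\le b$, forcing at least $n-m$ box/branching constraints to be tight and hence at most $m$ coordinates to be fractional. The only difference is cosmetic---the paper phrases the count as ``at least $n-m$ equalities of the form $x^*_j=v_j$'' while you phrase it as ``$n\le (n-k)+m$''---but the underlying reasoning is identical.
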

  
  \begin{proof}
		Notice that the LP's in the BB tree for this instance have the form 
	\begin{align}
    \max &\quad \ip{\c}{x} \notag\\
    \text{s.t.} &\quad \A x  \leq b \notag\\
    &\quad x_j = 0, ~~~~~~~~\forall j \in J_0 \label{eq:basic}\\
    &\quad x_j = 1, ~~~~~~~~\forall j \in J_1 \notag\\
    &\quad x_j \in [0,1], ~~\forall j \notin J_0 \cup J_1, \notag
	\end{align}
	for disjoint subsets $J_0,J_1 \subseteq [n]$ (i.e. the fixings of variables due to the branchings up to this node in the tree).
	
  	The feasible region $P$ in \eqref{eq:basic} is bounded, so there is an optimal solution $x^*$ of the LP that is a vertex of $P$. This implies that at least $n$ of the constraints of the LP are satisfied by $x^*$ at equality. Since there are $m$ constraints in $Ax \le b$, at least $n-m$ of these equalities are of the form $x^*_j = v_j$ (for some $v_j \in \{0,1\}$) and so at most $m$ $x^*_j$'s can be fractional. 
  \end{proof}
	
	Recall that the \emph{integrality gap} of $\IP$ is $\gap := \OPT(\LP) - \OPT(\IP)$, namely the  optimal value its LP relaxation $\LP := \max\{\ip{c}{x} : Ax \le b, ~x \in [0,1]^n\}$ minus the optimal IP value. Dyer and Frieze proved the following property that will be crucial for our results: for a random instance of $\IP$ (defined right after the definition of $\IP$), the integrality gap is $O(\frac{\log^2 n}{n}$). 	
	
	\begin{thm}[Theorem 1 of~\cite{dyerFriezeKnap}] \label{thm:gap}
		Consider a vector $\beta \in (0,\frac{1}{2})^n$ and let $b \in \R^n$ be given by $b_j = \beta_j \cdot n$ for $j \in \{1,\ldots,n\}$. Then there are scalars $a_1,a_2 \ge 1$ depending only on $m$ and $\min_j \beta_j$ such that the following holds: for a random instance $\mathcal{I}$ of $\IP$ 
		\begin{align*}
			\Pr\bigg(\gap(\mathcal{I}) \ge  \alpha a_1\,\frac{\log^2 n}{n}\bigg) ~\le~ 2^{-\alpha a_2}
		\end{align*}
		for all $\alpha \le \frac{3 \log n}{a_2}$.
	\end{thm}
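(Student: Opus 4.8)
The statement is an integrality-gap bound, so the plan is to exhibit, with high probability, a feasible integral solution whose value is within $\alpha a_1 \log^2 n / n$ of $\OPT(\LP(\mathcal{I}))$; this follows the exchange-and-rounding strategy behind Lueker's one-dimensional analysis, extended to $m$ packing constraints. First I would use LP duality to pin down the structure of an optimal solution of $\LP(\mathcal{I}) = \max\{\ip{c}{x} : Ax \le b,\ 0 \le x \le 1\}$. Dualizing the $m$ packing constraints produces an optimal dual vector $\lambda \in \R^m_{\ge 0}$, and stationarity forces $x_j = 1$ whenever the reduced cost $c_j - \ip{\lambda}{A_j}$ is positive and $x_j = 0$ whenever it is negative, where $A_j$ denotes the $j$-th column of $A$; only coordinates with zero reduced cost can be fractional, and by Lemma~\ref{lemma:basicSol} there are at most $m$ of these. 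Hence the LP solution is governed by the ``critical hyperplane'' $\{(a,\gamma) \in \R^{m+1} : \gamma = \ip{\lambda}{a}\}$ in the space of item data $(A_j, c_j)$, and the entire gap is attributable to the few fractional coordinates together with the capacity left unused once they are removed.

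Next I would convert this into a rounding problem. Rounding every fractional coordinate down to $0$ yields an integral feasible point, frees a residual capacity vector $r \in \R^m_{\ge 0}$ whose coordinates are $O(m)$ (only $m$ entries changed and each $a_{ij} \le 1$), and forfeits at most the profit of those coordinates. The goal is to recover almost all of this forfeited profit by swapping in a set $S$ of items lying in a thin slab of width $\e$ about the critical hyperplane, i.e. with $|c_j - \ip{\lambda}{A_j}| \le \e$. Such items have profit essentially equal to their $\lambda$-weighted capacity consumption, so if $S$ consumes capacity matching $r$ closely in every coordinate while using a slab of small width, the recovered profit matches the forfeited profit up to a controlled error. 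Concretely, the gap is bounded by roughly $\ip{\lambda}{r - \sum_{j \in S} A_j} + \e\,|S|$, so driving it down to $O(\log^2 n / n)$ reduces to the combinatorial claim that a slab of width $\e \approx \log^2 n / n$ contains a subset whose capacity vectors sum to within $O(1/n)$ of the target $r$ in all $m$ coordinates.

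The remaining, and genuinely substantive, work is probabilistic. Since the data $(A_j, c_j)$ are i.i.d. uniform on $[0,1]^{m+1}$, for a fixed $\lambda$ the slab of width $\e$ captures a number of items that is a sum of independent indicators with mean $\Theta(\e n)$, and disjoint groups of these items provide independent ``filling opportunities'': each group has a constant chance of supplying a small swap that nudges the attained capacity vector toward $r$, so the probability that $\Theta(\log^2 n)$ groups collectively fail to fill $r$ to the required precision decays geometrically, producing the $2^{-\alpha a_2}$ tail once the target gap is set to $\alpha a_1 \log^2 n / n$. The main obstacle is that $\lambda$ — hence the critical hyperplane, the slab, and the target $r$ — is a random, data-dependent object, so one cannot union-bound against a single fixed slab. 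The remedy is to discretize: build a fine net over the bounded region of admissible dual vectors (bounded because $\beta_j < 1/2$ keeps each capacity a constant fraction of the total weight, forcing the shadow prices to be $O(1)$), establish the filling estimate uniformly over the net, and check that perturbing $\lambda$ within a net cell alters slab membership negligibly. Pushing the multidimensional exchange argument and this uniform-over-$\lambda$ concentration through together, and reading off the dependence of $a_1, a_2$ on $m$ and $\min_j \beta_j$, is where essentially all the effort lies; the ceiling $\alpha \le 3 \log n / a_2$ is precisely the point at which the slab width reaches $\Theta(\log^3 n / n)$ and the filling argument saturates, the tail having already dropped to order $n^{-3}$.
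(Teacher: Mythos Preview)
The paper does not prove this statement: Theorem~\ref{thm:gap} is quoted as Theorem~1 of Dyer and Frieze~\cite{dyerFriezeKnap} and used as a black box throughout, so there is no in-paper argument to compare against. Your sketch is a faithful outline of the Dyer--Frieze strategy (itself the multidimensional extension of Lueker's one-row argument): identify the optimal dual $\lambda$ and the associated critical hyperplane, round the at most $m$ fractional LP coordinates to zero, and then repair the lost value by packing items drawn from a thin slab about that hyperplane, with the slab width chosen so that each repaired item contributes value essentially equal to its $\lambda$-weighted size; the $\e$-net over duals is exactly the device Dyer and Frieze use to decouple the random $\lambda$ from the concentration estimate. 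The place where the real work hides, and where your sketch is thinnest, is the multidimensional ``filling'' step: showing that among $\Theta(\log^2 n)$ slab items one can match an arbitrary target $r \in [0,O(m)]^m$ in all $m$ coordinates to accuracy $O(1/n)$ is not a one-line concentration bound but an inductive covering argument over the $m$ coordinates, and this is where the $m$-dependence of $a_1,a_2$ is actually produced. Nothing you wrote is wrong, but be aware that turning this paragraph into a proof is the content of the cited paper rather than a routine exercise.
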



	\subsection{Notation}
	
	We use the shorthands $ \binom{n}{\leq k} := \sum_{i = 0}^k \binom{n}{i}$ and $[n] := \{1,2,\ldots,n\}$. We also use $\binom{[n]}{\leq k}$ to denote the family of all subsets of $[n]$ of size at most $k$. We use $A^j$ to denote the $j$th column of the matrix $A$. For any vector $x \in \mathbb{R}^n$, we use $x^+$ to denote the vector that satisfies $x^+_i = \max(0, x_i)$ for all $i \in [n]$.


	\section{Branch-and-bound and good integer solutions}\label{sec:goodsol}
	
		In this section we connect the size of the BB tree for any instance of $\IP$ and the number of its near-optimal solutions. 
	
     %
     %

To make this precise, first let $\LP$ be the LP relaxation of $\IP$ 
    \begin{align}
       \max &\quad \ip{\c}{x} \notag\\
        \text{s.t.} &\quad \A x  \leq b   \tag{$\LP$}\\
        &\quad x \in [0,1]^n, \notag
    \end{align}
    and its Lagrangian relaxation
    \begin{align*}
       \min_{\lambda \ge 0} \max_{x \in [0,1]^n} \ip{\c}{x} - \lambda^\top (A x - b) \quad\equiv\quad \min_{\lambda \ge 0} \max_{x \in [0,1]^n} \sum_{j \in [n]} (c_j - \ip{\lambda}{A^j}) x_j + \ip{b}{\lambda}.
    \end{align*}    
  Let $(x^*,\lambda^*)$ be a saddle point of this Lagrangian, namely  
  %
  \begin{align*}
  	x^* &= \argmax\bigg\{\sum_{j \in [n]} (c_j - \ip{\lambda^*}{A^j})x_j + \ip{b}{\lambda^*} ~:~ x \in [0,1]^n\bigg\}\\
  	\lambda^* &= \argmin\bigg\{\sum_{j \in [n]} (c_j - \ip{\lambda}{A^j})x^*_j + \ip{b}{\lambda} ~:~ \lambda \ge 0\bigg\}.
  \end{align*}
  Recall that this saddle point solution has the same value as $\OPT(\LP)$~\cite{ben2001lectures}. 
Also notice that $c_j - \ip{\lambda^*}{A^j}$ can be thought as the reduced cost of variable $x_j$ and by optimality of $x^*$
\begin{align}
    x^*_j = \begin{cases}
    1 & \text{ if  } c_j - \ip{\lambda^*}{A^j} > 0 \\
    0 & \text{ if  } c_j - \ip{\lambda^*}{A^j} < 0.
    \end{cases}   \label{eq:lin1}
\end{align}  
  
 	For any point $x \in [0,1]^n$, we define the quantity 
$$\pareto(x) = \sum_{j \in [n]} (c_j - \ip{\lambda^*}{A^j}) \cdot (x^*_j - x_j),$$
		which is then the difference in value between $x$ and $x^*$ given by the reduced costs. Given this, we say that a 0/1 point $x$ \emph{good} if its $\pareto$ is at most $\gap$, and we use $G$ to denote the set of all good points, namely 
	\begin{align}
	G := \{x \in \{0,1\}^n : \pareto(x) \le \gap\}. \label{eq:good}
	\end{align}

	The following is the main result of this section, which states that we can bound the size of the branch-and-bound tree based on the number of these good points. 
	
	\begin{thm}\label{thm:good}
	Consider the branch-and-bound algorithm with best-bound node selection rule for solving $\IP$. Then its final tree has at most $2 |G| n + 1$ nodes.
	\end{thm}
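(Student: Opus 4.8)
The plan is to bound the number of \emph{internal} (branched-on) nodes of the final tree and then read off the total from the tree's shape. Every node that is branched on receives exactly two children in step \emph{Branching}, and every other node is pruned and hence a leaf; so the final tree is a full binary tree. If it has $I$ internal nodes it has $I+1$ leaves, hence $2I+1$ nodes in total, so it suffices to prove that the number of internal nodes is at most $|G|\,{n \choose \le m}$. I would establish this by exhibiting an \emph{injective} map from the set of internal nodes into $G \times {[n] \choose \le m}$.

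To build the map, fix an internal node $N$ with optimal LP solution $\tilde x$, and let $F = F_N$ be its set of fractional coordinates; by Lemma~\ref{lemma:basicSol} we have $|F|\le m$, so $F \in {[n]\choose \le m}$. For the first coordinate of the image I want a \emph{good} rounding of $\tilde x$: a point $\hat x\in\{0,1\}^n$ that agrees with $\tilde x$ off $F$ and lies in $G$. Existence of such a rounding is the crux. Write $b':=A\tilde x$. Since $N$ is branched on, Lemma~\ref{lemma:pruning} gives $\ip{c}{\tilde x}\ge \OPT(\IP(b))$, while feasibility of $\tilde x$ for $\LP(b)$ (note $b'\le b$) gives $\LP_=(b')\le \LP(b)$; combining,
\[
\LP_=(b') - \ip{c}{\tilde x} \;\le\; \LP(b) - \OPT(\IP(b)) \;=\; \gap(b).
\]
Now consider the $2^{|F|}$ roundings $\hat x^\sigma$ for $\sigma\in\{0,1\}^F$, each agreeing with $\tilde x$ off $F$ and equal to $\sigma$ on $F$, weighted by $w_\sigma := \prod_{j\in F}\tilde x_j^{\sigma_j}(1-\tilde x_j)^{1-\sigma_j}$, so that $\sum_\sigma w_\sigma=1$ and $\sum_\sigma w_\sigma \hat x^\sigma = \tilde x$. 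Using that $\LP_=(\cdot)$ is a concave function of its right-hand side, I obtain
\[
\sum_\sigma w_\sigma\,\pareto(\hat x^\sigma) = \sum_\sigma w_\sigma \LP_=(A\hat x^\sigma) - \ip{c}{\tilde x} \le \LP_=(b') - \ip{c}{\tilde x} \le \gap(b).
\]
Since each $\pareto(\hat x^\sigma)\ge 0$ and their $w$-average is at most $\gap(b)$, some $\sigma$ satisfies $\pareto(\hat x^\sigma)\le \gap(b)$, i.e. $\hat x^\sigma\in G$; I set $\hat x_N:=\hat x^\sigma$ for one such $\sigma$ and map $N\mapsto(\hat x_N,F_N)$.

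It remains to verify injectivity, for which I would split two distinct internal nodes $N,N'$ by their tree position. If $N$ is an ancestor of $N'$, let $j$ be the coordinate branched on at $N$; then $j$ is fractional at $N$ (so $j\in F_N$) but fixed on the path to $N'$ (so $j\notin F_{N'}$), whence $F_N\ne F_{N'}$. If $N$ and $N'$ are incomparable, their lowest common ancestor branches on some coordinate $j$, sending one node into the $x_j=0$ subtree and the other into the $x_j=1$ subtree; then $j$ is integral (fixed) in both LP solutions, so the rounding leaves it untouched and $\hat x_N,\hat x_{N'}$ disagree in coordinate $j$. In either case $(\hat x_N,F_N)\ne(\hat x_{N'},F_{N'})$, so the map is injective, the number of internal nodes is at most $|G|\,{n\choose \le m}$, and the total is at most $2|G|\,{n\choose \le m}+1$.

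The step I expect to be the main obstacle is establishing that a good rounding always exists. Naively, rounding the up-to-$m$ fractional coordinates can cost up to $m$ units of objective, far exceeding $\gap(b)$; the concavity-plus-averaging argument is precisely what converts this into a statement about the \emph{average} pareto gap, which the best-bound guarantee $\ip{c}{\tilde x}\ge \OPT(\IP(b))$ pins down by $\gap(b)$. The injectivity, by contrast, is a purely combinatorial consequence of variable branching together with the fact that the chosen rounding disturbs only fractional coordinates.
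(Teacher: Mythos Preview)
Your proof is correct and follows essentially the same approach as the paper. The paper phrases the argument via the sets $C_J(x)$ (points fractional exactly on $J$ and agreeing with $x$ off $J$) and shows in two lemmas that each branched node's LP optimum lies in some $C_J(x)$ with $x\in G$, and that distinct branched nodes cannot share the same $(x,J)$; your explicit injective map $N\mapsto(\hat x_N,F_N)$, the concavity/averaging argument for the good rounding, and the ancestor/incomparable case split for injectivity are exactly the same ingredients in slightly different packaging.
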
	
	
		The remainder of this section is dedicated to proving this result. So let $\mathcal{T}$ denote the final BB tree constructed by the algorithm, and let $\mathcal{N}$ denote the set of its internal nodes. The first observation is that all LP solutions seen throughout the BB tree have small $\pareto$. 
		
	\begin{lemma}\label{lem:internal_pareto}
	Let $N$ be a node that is branched on in the BB tree $\mathcal{T}$, i.e. $N \in \mathcal{N}$, and let $x^N$ be an optimal solution for the LP of this node with at most $m$ fractional coordinates (via Lemma \ref{lemma:basicSol}). Then,
	$$\pareto(x^N) \leq \gap.$$
	\end{lemma}
	
	\begin{proof}
		First notice that for every feasible solution $x$ to $\LP$ we have 
		\begin{align*}
			\OPT(\LP) \ge \ip{c}{x} + \pareto(x),
		\end{align*} 
 since the equality of optimal value of $\LP$ and its Lagrangian and then feasibility of $x$ give
	\begin{align*}
		\OPT(\LP) - \ip{c}{x} &= \sum_{j \in [n]} (c_j - \ip{\lambda^*}{A^j})x^*_j + \ip{b}{\lambda^*} - \ip{c}{x} \ge \sum_{j \in [n]} (c_j - \ip{\lambda^*}{A^j})x^*_j + \ip{\A x}{\lambda^*} - \ip{c}{x},
	\end{align*}
	and the claim follows by noticing that the right-hand side is precisely $\pareto(x)$. Moreover, by the best-bound node selection rule (Lemma \ref{lemma:pruning}) we have that $\ip{c}{x^N} \ge \OPT(\LP) - \gap$. Putting these observations together proves the result. 
	\end{proof}

	In the next two lemmas we will demonstrate an association between BB nodes and good points. In particular, we will show that the association is at least one-to-one and at most $n$-to-one. This essentially allows us to conclude the proof of Theorem \ref{thm:good}.
	
	\begin{lemma}\label{lem:counting_with_intsolns}
	There exists an association $r: \mathcal{N} \rightarrow G$ that associates any internal node $N \in \mathcal{N}$ to at least one good point $x \in G$, i.e. for all $N \in \mathcal{N}$, it holds that $|r(N)| \geq 1$.
	\end{lemma}
	
	\begin{proof}
	Let $N \in \mathcal{N}$ be any internal node of $\mathcal{T}$ and $x^N$ be its optimal LP solution. Let $J \subset [n]$ denote the set of indices where $x^N$ is fractional. Define $r: \mathcal{N} \rightarrow G$ to associate $N$ with any
    $$x' \in \arg\min \{\ \pareto(x) : x \in \{0,1\}^n,\ x = x^N\ \forall j \not \in J \}.$$
    Noting that the objective function $\pareto(\cdot)$ is affine ($\lambda^*$ is fixed) and that $x^N$ is in the convex hull of the $0,1$ points $\{x \in \{0,1\}^n : x = x^N\ \forall j \not \in J \}$, it must hold that
    $$\pareto(x') \leq \pareto(x^N) \leq \gap,$$
    where the last inequality follows from Lemma \ref{lem:internal_pareto}. Then we see that
    $$x' \in \{x \in \{0,1\}^n : \pareto(x) \leq \gap\}.$$
	\end{proof}
	
	\begin{lemma}\label{lem:no_repeated_hulls}
    The association of Lemma \ref{lem:counting_with_intsolns} is at most $n$-to-$1$, i.e. for any $x \in G$, it holds that $|r^{-1}(x)| \leq n$. 
	\end{lemma}
	
	\begin{proof}
	Let $x \in G$. Let $N_1, N_2$ be two internal nodes of $\mathcal{T}$, with optimal LP solutions $x^{(1)}$ and $x^{(2)}$ respectively, such that $r(x^{(1)}) = r(x^{(2)}) = x'$. In the following paragraph we show that either $N_1$ is a descendant of $N_2$, or vice versa. Then, for any $x \in G$ all nodes associated to $x$ (i.e. all $N \in r^{-1}(x)$) must lie in the same path from the root in $\mathcal{T}$ (i.e. must all be descendants of one another). So since a path from the root in $\mathcal{T}$ can have length at most $n$, we have the desired conclusion.
	
	Suppose, for the sake of contradiction, $N_1$ and $N_2$ are not one a descendant of the other in the BB tree $\mathcal{T}$. Let $N \neq N_1, N_2$ be their lowest common ancestor in the tree $\mathcal{T}$, and let $f \in \{1,...,n\}$ be the index where node $N$ was branched on. Since nodes $N_1$ and $N_2$ are on different subtrees under $N$, without loss of generality assume that $N_1$ is in the subtree with $x_f = 0$ and $N_2$ is in the subtree with $x_f = 1$. Then since $x_f^{(1)} = 0$ and $r(x^{(1)}) = x'$, it must be that $x'_f = 0$. However, since $x^{(2)}_f = 1$, we have $x^{(2)}_f \not = x'_f$, so that by definition of $r(\cdot)$ we have $x^{(2)} \notin r^{-1}(x')$, getting the desired contradiction. 
	\end{proof}
	
	\begin{proof}[Proof of Theorem \ref{thm:good}]	
	Combining Lemma \ref{lem:counting_with_intsolns} and Lemma \ref{lem:no_repeated_hulls}, we see that the number of internal nodes of $\mathcal{T}$ (i.e. $|\mathcal{N}|$) is upper bounded by $|G| \cdot n$. Since the total number of nodes in a binary tree is at most twice the number of its internal nodes plus 1, we see that $\mathcal{T}$ has at most $2 |G| n + 1$ nodes, giving the desired bound. 
	\end{proof}
	
	We spend the remainder of this paper obtaining an upper bound of the form $n^{O(m)}$ for the number of good solutions~\eqref{eq:good}, which will then prove Theorem \ref{thm:main}.


	\section{Value of solutions and geometry of items}\label{sec:geometry}
	
	Going back to \eqref{eq:lin1}, we can see the saddle point $x^*$ as being obtained by a \emph{linear classification} of columns induced by $\lambda^*$, namely $x^*_j$ is set to 0/1 depending on whether 
	\begin{align*}
		c_j - \ip{\lambda^*}{A^j} \gtrless 0 ~~\equiv~~ \ip{(c_j, A^j)}{(1, -\lambda^*)} \gtrless 0,
	\end{align*}
	that is, depending where the column $(c_j, A^j)$ lands relative to the hyperplane in $\R^{m+1}$ $$H := \{y \in \R^{m+1} : \ip{(1,-\lambda^*)}{y} = 0\};$$ see Figure \ref{fig:slabs}.
	
		\begin{figure}
	    \centering
	    \includegraphics[scale=0.26]{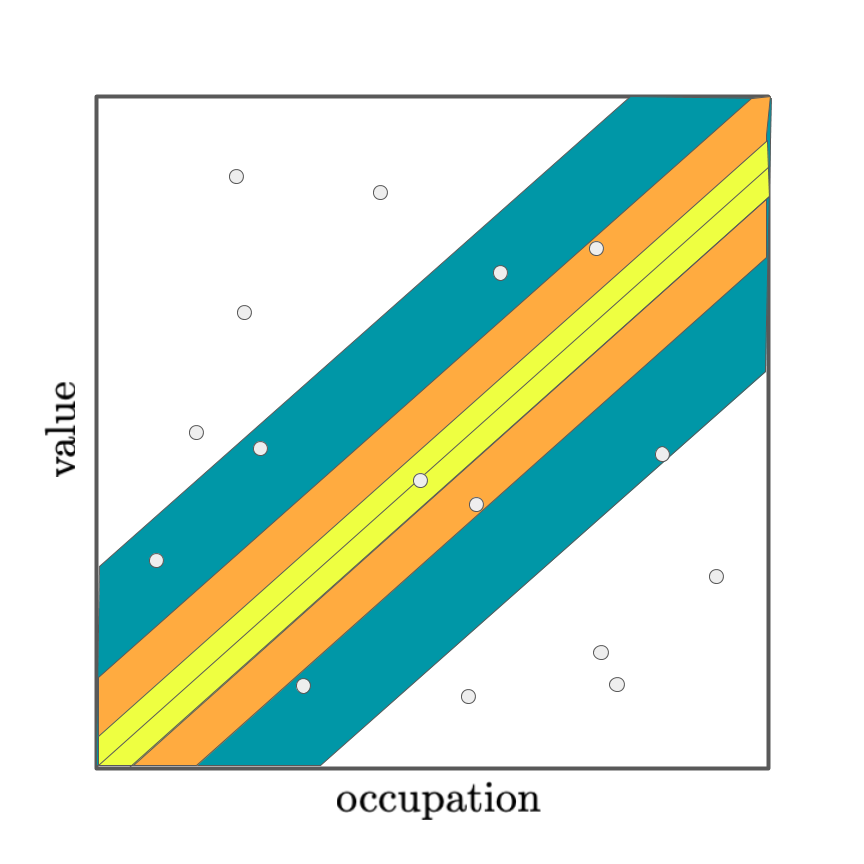}
	    \caption{Each dot represents an item/column $(c_j, A^j)$ of $\IP$. The grey line coming from the origin is $H$. The optimal solution has $x^*_j = 1$ for all items $j$ above this line, $x^*_j = 0$ for all items $j$ below this line. Each $J_\ell$ is the set of items that lie in one of the colored regions; $J_{\text{rem}}$ are the items that lie on the inner most region (yellow).}
	    \label{fig:slabs}
	\end{figure}
	
	The next lemma says that the $\pareto(x)$ of a 0/1 point $x$ increases the more it disagrees with $x^*$, and the penalty for the disagreement on item $j$ depends on the distance of the column $(c_j, A^j)$ to the hyperplane $H$. We use $d(z,U)$ to denote the Euclidean distance between a point $z$ and a set $U$, and $\ones(P)$ to denote the 0/1 indicator of a predicate $P$. This generalizes Lemma 3.1 of \cite{goldbergSpaccamela}.
	
	\begin{lemma} \label{lemma:hypDist}
		Consider any $x \in \{0,1\}^n$. Then
		\begin{align*}
			\pareto(x) \ge \sum_{j \in [n]} d\Big((c_j,A^j),\, H \Big) \cdot \ones(x_j \neq x^*_j).	
		\end{align*}
	\end{lemma}
	
	\begin{proof}
		Recalling the definition of $\pareto(\cdot)$, we can rewrite it as follows:
		\begin{align}
			\pareto(x) &= \sum_{j \in [n]} (\c_j - \ip{\lambda^*}{A^j}) (x^*_j - x_j) \label{eq:hypDist} \\
			&= \sum_{j : x^*_j = 0} (\c_j - \ip{\lambda^*}{A^j}) (x^*_j - x_j) + \sum_{j : x^*_j = 1} (\c_j - \ip{\lambda^*}{A^j}) (x^*_j - x_j).
		\end{align}  
		To analyze the right-hand side of this equation, consider a term in the first sum. Since $x^*_j = 0$, the term is non-zero exactly when the 0/1 point $x$ takes value $x_j = 1$. Moreover, since $x^*_j = 0$ implies that $\c_j - \ip{\lambda^*}{A^j}$ is negative, this means that the terms in the first sum of \eqref{eq:hypDist} equal $|\c_j - \ip{\lambda^*}{A^j}| \cdot \ones(x_j \neq x^*_j)$. A similar analysis shows that the terms in the second sum of \eqref{eq:hypDist} also equal $|\c_j - \ip{\lambda^*}{A^j}| \cdot \ones(x_j \neq x^*_j)$. Together these give
		\begin{align}
			\pareto(x) = \sum_{j \in [n]} |\c_j - \ip{\lambda^*}{A^j}| \cdot \ones(x_j \neq x^*_j). \label{eq:hypDist2}		
		\end{align}
	Finally, notice that $$|\c_j - \ip{\lambda^*}{A^j}| \ge d\Big((c_j,A^j),~H \Big),$$ because the point $(\ip{\lambda^*}{A^j}, A^j)$ belongs to the hyperplane $H(\lambda)$ we get 
	\begin{align*}
	d\Big((c_j,A^j),~H \Big) &\le \|(c_j,A^j) - (\ip{\lambda^*}{A^j}, A^j)\|_2 \\
	&= |c_j - \ip{\lambda^*}{A^j}|.
	\end{align*}
	Plugging this bound in \eqref{eq:hypDist2} concludes the proof of the lemma. 
	\end{proof}

	Thus, a good solution, i.e., one with $\pareto(x) \le \gap$, must disagree with $x^*$ on few items $j$ whose columns $(c_j, A^j)$ are ``far'' from the hyperplane $H$. To make this quantitative we bucket these distances in powers of 2, so for $\ell \ge 1$ define the set of items
	\begin{align*}
		J_{\ell} &:= \bigg\{j : d\Big((c_j,A^j),\, H \Big) \textrm{ is} \\
		&\qquad\qquad\quad \textrm{ in the interval } \Big( \tfrac{\log n}{n} 2^\ell,~ \tfrac{\log n}{n} 2^{\ell+1} \Big] \bigg\},
  	\end{align*}
  	and define $J_{\text{rem}} = [n] \setminus \bigcup_{\ell \ge 1} J_\ell$ as the remaining items (see Figure \ref{fig:slabs}). Since every item in $J_\ell$ has distance at least $\frac{\log n}{n} 2^\ell$ from $H$, we directly have the following.

	\begin{cor} \label{cor:hypDist}
		If $x \in \{0,1\}^n$ is a good point (i.e., $\pareto(x) \le \gap$), then for every $\ell \ge 1$ the number of coordinates $j \in J_\ell$ such that $x_j \neq x^*_j$ is at most $\frac{C}{2^\ell}$, where $C := \frac{n}{\log n} \cdot \gap$. 
	\end{cor}
	
	Then, an easy counting argument gives an upper bound on the total number of good points $G$. 
	
	\begin{lemma} \label{lemma:counting}
		We have the following upper bound:
		\begin{align*}
			|G| \le  2^{|J_{\text{rem}}|}\cdot \prod_{\ell = 1}^{\log C} \binom{|J_\ell|}{\le C/2^{\ell}} ,
		\end{align*}
		where $C := \frac{n}{\log n} \cdot \gap$.\footnote{We assume throughout that the $C/2^\ell$'s are integral to simplify the notation, but it can be easily checked that using $\lceil C/2^\ell \rceil$ instead does not change the results.}
	\end{lemma}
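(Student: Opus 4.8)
The plan is to cover the set of good points $G$ by the groups $B_{\tilde{x}}$ indexed by the finitely many distinct dual-based solutions, and then count the good points falling into each group separately. Concretely, take any good point $x \in G$ and let $b' := Ax$ be its slice. Since $x$ itself is feasible for $\LP_=(b')$, this program is feasible, and Lemma \ref{lemma:existCompat} produces a dual vector $\lambda$ whose induced partial solution $x(\lambda)$ is compatible with some optimal solution of $\LP_=(b')$; equivalently, $b' \in B_{x(\lambda)}$. Hence every good point lies in $B_{\tilde{x}}$ for at least one dual-based solution $\tilde{x}$, so $G \subseteq \bigcup_{\tilde{x}} \{x \in G : Ax \in B_{\tilde{x}}\}$, where the union ranges over distinct dual-based solutions. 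By Lemma \ref{lemma:hypArrange} there are at most $(\cst \cdot n)^m$ such $\tilde{x}$, so it remains only to bound the number of good points in a single group.

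Fix a dual-based solution $\tilde{x}$ and choose a representative $\lambda$ with $x(\lambda) = \tilde{x}$ (which also fixes the decomposition $[n] = J_{\text{rem}}(\lambda) \cup \bigcup_{\ell \ge 1} J_\ell(\lambda)$). I would bound the number of good points $x$ with $Ax \in B_{\tilde{x}}$ by counting, block by block, how freely such an $x$ may be chosen. On the remaining coordinates $J_{\text{rem}}(\lambda)$ I place no constraint, giving at most $2^{|J_{\text{rem}}(\lambda)|}$ choices. On each block $J_\ell(\lambda)$ with $\ell \ge 1$, Corollary \ref{cor:hypDist} guarantees that a good point disagrees with $x(\lambda)$ on at most $C/2^\ell$ of these coordinates; since $x(\lambda)_j \in \{0,1\}$ for $j \in J_\ell(\lambda)$, fixing the set of disagreement coordinates determines $x$ on this block, so there are at most ${|J_\ell(\lambda)| \choose \le C/2^\ell}$ choices. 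Multiplying over the blocks yields the per-group bound $2^{|J_{\text{rem}}(\lambda)|} \prod_{\ell \ge 1} {|J_\ell(\lambda)| \choose \le C/2^\ell}$.

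Crucially, the product over $\ell$ truncates at $\ell = \log C$: for $\ell > \log C$ we have $C/2^\ell < 1$, so the allowed number of disagreements is $0$ and the good point must agree exactly with $x(\lambda)$ on $J_\ell(\lambda)$, contributing a factor of $1$. This is exactly why the far items impose no additional freedom and why the stated product stops at $\log C$. Finally, summing the per-group bound over the at most $(\cst \cdot n)^m$ distinct dual-based solutions, and replacing each per-group bound by its maximum over $\lambda \in \R^m$, gives the claimed inequality.

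I expect no deep obstacle here, since this is essentially a clean counting argument; the two points demanding care are (i) that the assignment of a good point to a group is guaranteed only through the feasibility-plus-duality statement of Lemma \ref{lemma:existCompat}, and that a point may land in several groups, which is harmless for an upper bound; and (ii) that the block decomposition $J_\ell(\lambda)$ depends on the actual representative $\lambda$ and not merely on the sign pattern $x(\lambda)$. Since Corollary \ref{cor:hypDist} holds for whichever representative we pick, passing to $\max_{\lambda \in \R^m}$ at the very end absorbs this dependence cleanly.
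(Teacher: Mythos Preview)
Your proposal is correct and follows essentially the same route as the paper's own proof: cover $G$ by the groups $\{x \in G : Ax \in B_{x(\lambda_i)}\}$ indexed by the finitely many dual-based solutions (via Lemmas~\ref{lemma:existCompat} and~\ref{lemma:hypArrange}), bound each group by counting block-by-block using Corollary~\ref{cor:hypDist}, and sum. Your remarks about non-uniqueness of the group assignment and about the representative~$\lambda$ determining the $J_\ell$'s are valid cautions that the paper leaves implicit, but they do not change the argument.
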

	
	\begin{proof}
	Notice that every solution in $G$ can be thought of as being created by starting with the vector $x^*$ and then changing some of its coordinates, and because of Corollary \ref{cor:hypDist} we:
		
		\begin{itemize}
			\item Cannot change the value of $x^*$ in any coordinate $j$ in a $J_\ell$ with $\ell > \log C$
		
			\item Can only flip the value of $x^*$ in at most $\frac{C}{2^\ell}$ of the coordinates $j \in J_\ell$ for each $\ell = 1,\ldots,\log C$ (recall that $x^*$ is 0/1 in all such coordinates)
			
			\item Set a new arbitrary 0/1 value for (in principle all) coordinates in $J_{\text{rem}}$.
		\end{itemize}
		Since there are at most $2^{|J_{\text{rem}}|}\cdot \prod_{\ell = 1}^{\log C} \binom{|J_\ell|}{\le C/2^{\ell}}$ options in this process, we have the desired upper bound. 
	\end{proof}
	
	Notice that, ignoring the term $2^{|J_{\text{rem}}|}$, this already gives with good probability a quasi-polynomial bound $|G| \lesssim O(n)^{\text{polylog}(n)}$ for random instances of $\IP$: the upper bound on the integrality gap from Theorem \ref{thm:gap} gives that with good probability $C \le \log n$ and so we have $\log \log n$ binomial terms, each at most $\binom{n}{\log n} \le n^{\log n}$ (since $|J_\ell| \le n$). 
	
	In order to obtain the desired polynomial bound $|G| \le n^{O(m)}$, we need a better control on $|J_\ell|$, namely the number of points at a distance from the hyperplane $H$.


	\section{Number of items at a distance from the hyperplane}\label{sec:distance} 
	
	To control the size of the sets $J_\ell$ we need to consider a random instance of $\IP$ and use the fact that the columns $(c_j, A^j)$ are uniformly distributed in $[0,1]^{m+1}$. Recalling the definition of $J_\ell$, we see that an item $j$ belongs to this set only if the column $(c_j, A^j)$ lies on the $(m+1)$-dim \emph{slab} of width $\tfrac{\log n}{n} 2^{\ell+1}$ around $H$:
	\begin{align*}
		\Big\{ y \in \R^{m+1} : d(y, H) \le \tfrac{\log n}{n} 2^{\ell+1}\Big\}.
	\end{align*}
	It can be shown that the volume of this slab intersected with $[0,1]^{m+1}$ is proportional to its width, so as long as $H$ is independent of the columns, the probability that a random column $(c_j, A^j)$ lies in this slab is $\approx \tfrac{\log n}{n} 2^{\ell+1}$. Thus, we would expect that at most $\approx (\log n)\, 2^{\ell+1}$ columns lie in this slab, which gives a much improved upper bound on the (expected) size of $J_\ell$ (as indicated above, think $\ell \le \log \log n$). Moreover, using independence of the columns $(c_j, A^j)$, standard concentration inequalities show that for \emph{each} such slab with good probability the number of columns that land in it is within a multiplicative factor from this expectation. 
	
	Unfortunately, the hyperplane $H$ that we are concerned with, whose normal $\frac{(1,-\lambda^*)}{\|(1,-\lambda^*)\|_2}$ is determined by the data, is \emph{not} independent of the columns. So we will show a much stronger \emph{uniform} bound that shows that with good probability the above phenomenon holds \emph{simultaneously} for all slabs around \emph{all} hyperplanes, which then shows that it holds for $H$. We abstract this situation and prove such uniform bound. We use $\bS^{k-1}$ to denote the unit sphere in $\R^k$. 
	
	\begin{thm}[Uniform bound for slabs] \label{thm:uniform}
		For $u \in \mathbb{S}^{k-1}$ and $w \ge 0$, define the slab of normal $u$ and width $w$ as $$S_{u, w} := \Big\{y \in \R^k : \ip{u}{y} \in [-w, w]\Big\}.$$	Let $Y^1,\ldots,Y^n$ be independent random vectors uniformly distributed in the cube $[0,1]^k$, for $n \ge k$. Then with probability at least $1 - \frac{1}{n}$, we have that for all $u \in \mathbb{S}^{k-1}$ and $w \ge \tfrac{\log n}{n}$ at most $60nw k$ of the $Y^j$'s belong to $S_{u,w}$.
	\end{thm}

	While very general bounds of this type are available (for example, appealing to the low VC-dimension of the family of slabs), we could not find in the literature a good enough such \emph{multiplicative} bound (i.e., relative to the expectation $\approx nw$). The proof of Theorem \ref{thm:uniform} instead relies on an $\e$-net type argument.
	
	This result directly gives the desired upper bound on the size of the sets $J_{\text{rem}}$ and $J_{\ell}$. 
	
	\begin{cor}\label{cor:slabs_event}
	    With probability at least $1 - \frac{1}{n}$ we have simultaneously
	    $$|J_{\text{rem}}| \leq 120(m+1) \log n $$
	    $$|J_\ell| \leq 60(m + 1)2^{\ell + 1}\log n, \quad \forall \ell \in [\log n - 1]$$
	\end{cor}

	
    \subsection{Proof of Theorem \ref{thm:uniform}} \label{sec:uniform}
    
    Let $\bS'$ be a minimal $\e$-net, for $\e := \frac{\log n}{n \sqrt{k}}$, of the sphere $\bS^{k-1}$, namely for each $u \in \bS^{k-1}$ there is $u' \in \bS'$ such that $\|u' - u\|_2 \le \e$. It is well-known that there is such a net of size at most $(\frac{3}{\e})^k$ (Corollary 4.2.13 \cite{vershyninBook}). Also define the discretized set of widths $\mathbb{W}' := \{\frac{\log n}{n}, \frac{2\log n}{n},..., \sqrt{k} + \frac{\log n}{n}\}$ so that $|\mathbb{W}'| = \frac{n \sqrt{k}}{\log n} + 1$.

 We start by focusing on a single slab in the net, observing that these slabs are determined independently of the columns. Since the vector $Y^j$ is uniformly distributed in the cube $[0,1]^k$, the probability that it belongs to a set $U \subseteq [0,1]^k$ equals the volume $\vol(U)$. Using this and an upper bound on the volume of a slab (intersected with the cube), we get that the probability that $Y^j$ lands on a slab is at most proportional to the slab's width.
    
    \begin{lemma} \label{lemma:volSlab}
 			For every slab $S_{u',w'}$, with $u' \in \bS'$ and $w' \in \mathbb{W}'$, we have $\Pr(Y^j \in S_{u',w'}) \le 2\sqrt{2} w'$.
    \end{lemma}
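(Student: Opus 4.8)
The plan is to translate the probability into a volume and then bound that volume by slicing the cube in the direction $u$ and invoking a sharp bound on the cross-sectional areas. Since $Y^j$ is uniformly distributed on $[0,1]^k$, the probability that it lands in any set equals that set's volume, so $\Pr(Y^j \in S_{u,w}) = \vol\big(S_{u,w} \cap [0,1]^k\big)$ and it suffices to show $\vol\big(S_{u,w} \cap [0,1]^k\big) \le 2\sqrt{2}\,w$.

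First I would slice the cube by the hyperplanes perpendicular to $u$. Because $u$ is a \emph{unit} vector, the linear functional $y \mapsto \ip{u}{y}$ introduces no Jacobian distortion, so Fubini (the slicing/coarea formula) gives
$$\vol\big(S_{u,w} \cap [0,1]^k\big) = \int_{-w}^{w} A(t)\,dt, \qquad A(t) := \mathcal{H}^{k-1}\big(\{y \in [0,1]^k : \ip{u}{y} = t\}\big),$$
where $A(t)$ is the $(k-1)$-dimensional Euclidean area of the slice of the cube at ``height'' $t$ in the direction $u$. The crux is then a uniform bound $A(t) \le \sqrt{2}$ on every such slice, independent of $u$, $t$, and the dimension $k$. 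This is exactly the content of Ball's cube-slicing theorem, which states that every central hyperplane section of the unit cube has $(k-1)$-volume at most $\sqrt{2}$; combined with Brunn's theorem (the map $t \mapsto A(t)^{1/(k-1)}$ is concave, and by the central symmetry of the cube $A(t)$ is maximized at the central value $t_c = \ip{u}{(\tfrac12,\ldots,\tfrac12)}$) this yields $A(t) \le A(t_c) \le \sqrt{2}$ for all $t$, regardless of where the window $[-w,w]$ sits. Plugging this into the integral gives $\vol\big(S_{u,w} \cap [0,1]^k\big) \le \int_{-w}^{w} \sqrt{2}\,dt = 2\sqrt{2}\,w$, as claimed.

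The hard part is obtaining the dimension-free constant $\sqrt{2}$ on the slices. A naive approach would bound the one-dimensional density of $\ip{u}{Y^j} = \sum_i u_i Y^j_i$ by convolving the box densities of the $u_i Y^j_i$, but this only produces a bound of order $1/\max_i |u_i|$, which degrades like $\sqrt{k}$ for ``flat'' directions such as $u = k^{-1/2}(1,\ldots,1)$. It is precisely the sharp cube-slicing estimate that removes this dimension dependence and matches the stated constant $2\sqrt{2}$; everything else (the volume-equals-probability identity and the Fubini slicing) is routine.
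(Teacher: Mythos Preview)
Your proof is correct and follows essentially the same route as the paper: reduce to a volume bound, slice the cube perpendicular to $u$, invoke the $\sqrt{2}$ bound on hyperplane sections of the cube, and integrate over the interval $[-w,w]$. You add the helpful detail that Ball's theorem covers only central sections and Brunn's concavity (plus central symmetry) extends it to all parallel slices, whereas the paper simply cites the cube-slicing result as holding for every slice.
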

    
    \begin{proof}    
			Since these slabs are determined independently of the columns, it is equivalent to show that the volume $\vol(S_{u',w'} \cap [0,1]^k)$ is at most $2 \sqrt{2} w'$. Let $\text{slice}(h) := \{ y \in [0,1]^k : \ip{y}{u} = h\}$ be the slice of the cube with normal $u'$ at height $h$. It is known that every slice of the cube has $(k-1)$-dim volume $\vol_{k-1}$ at most $\sqrt{2}$~\cite{cubeSlice}, and since $S_{u',w'}\cap [0,1]^k = \bigcup_{h \in [-w',w']} \text{slice}(h)$, by integrating we get
    	\begin{align*}
			\vol(S_{u', w'} \cap [0,1]^k) = \int_{- w'}^{w'} \vol_{k-1}(\text{slice}(h)) \,\textrm{d}h ~\le~ 2 \sqrt{2} w'
    	\end{align*}
    	as desired.
    \end{proof}
    
    Let $N_{u,w}$ be the number of vectors $Y^j$ that land in the slab $S_{u,w}$. From the previous lemma we have $\E N_{u',w'} \le 2 \sqrt{2} w' n$. To show that $N_{u',w'}$ is concentrated around its expectation we need Bernstein's Inequality; the following convenient form is a consequence of Appendix A.2 of \cite{kol} and the fact $\Var(\sum_j Z_j) = \sum_j \Var(Z_j) \le \sum_j \E Z_j^2 \le \E \sum_j Z_j$ for independent random variables $Z_j$ in $[0,1]$.  
    
    \begin{lemma} \label{lemma:bernstein}
    	Let $Z_1,\ldots,Z_n$ be independent random variables in $[0,1]$. Then for all $t \ge 0$, 
			\begin{align*}
				\small
				&\Pr\bigg(\sum_j Z_j \ge \E \sum_j Z_j + t\bigg) \\
				&~~~~~\le \exp\bigg(-\min\bigg\{\frac{t^2}{4 \E \sum_j Z_j}~,~\frac{3t}{4}  \bigg\} \bigg).
			\end{align*}   	
    \end{lemma}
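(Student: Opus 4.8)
The plan is to invoke a standard Bernstein inequality for bounded independent random variables (as recorded in Appendix A.2 of \cite{kol}), specialize its variance term using the trivial bound on the $Z_j$'s, and then unpack the resulting denominator into the advertised minimum of two exponents via an elementary case split.

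First I would record the variance bound supplied in the statement. By independence, $\Var(\sum_j Z_j) = \sum_j \Var(Z_j)$, and since each $Z_j$ takes values in $[0,1]$ we have $Z_j^2 \le Z_j$ and hence $\Var(Z_j) \le \E Z_j^2 \le \E Z_j$. Summing yields $\sigma^2 := \Var(\sum_j Z_j) \le \E \sum_j Z_j =: \mu$. Next, the standard Bernstein bound for a sum $S = \sum_j Z_j$ of independent variables with $|Z_j - \E Z_j| \le 1$ reads $\Pr(S - \E S \ge t) \le \exp\big(-\tfrac{t^2}{2\sigma^2 + (2/3)t}\big)$. Since the right-hand side is increasing in $\sigma^2$, substituting $\sigma^2 \le \mu$ gives $\Pr(S - \E S \ge t) \le \exp\big(-\tfrac{t^2}{2\mu + (2/3)t}\big)$.

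It then remains to verify the numerical inequality $\tfrac{t^2}{2\mu + (2/3)t} \ge \min\{\tfrac{t^2}{4\mu},\, \tfrac{3t}{4}\}$, which I would prove by splitting on the sign of $t - 3\mu$. If $t \le 3\mu$, then $\tfrac{2}{3}t \le 2\mu$, so the denominator is at most $4\mu$ and the left side is at least $\tfrac{t^2}{4\mu}$; moreover $\tfrac{t^2}{4\mu} \le \tfrac{3t}{4}$ in this regime, so $\tfrac{t^2}{4\mu}$ is exactly the minimum. If instead $t > 3\mu$, then $2\mu \le \tfrac{2}{3}t$, so the denominator is at most $\tfrac{4}{3}t$ and the left side exceeds $\tfrac{3t}{4}$, which is the minimum in this regime since $\tfrac{t^2}{4\mu} > \tfrac{3t}{4}$. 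In either case the exponent dominates the claimed minimum, and exponentiating completes the proof.

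I do not expect a genuine obstacle here. The only points requiring care are matching the precise constants in the Bernstein statement drawn from \cite{kol} (so that the denominator $2\sigma^2 + \tfrac{2}{3}t$ and the boundedness constant $|Z_j - \E Z_j| \le 1$ line up correctly) and keeping the two-case analysis clean, so that in each regime one correctly identifies which of $\tfrac{t^2}{4\mu}$ and $\tfrac{3t}{4}$ is the minimum being dominated. Everything else is routine algebra.
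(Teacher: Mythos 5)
Your proposal is correct and follows exactly the route the paper indicates: it invokes the standard Bernstein inequality (Appendix A.2 of \cite{kol}) with the boundedness constant $1$, applies the same variance bound $\Var(\sum_j Z_j) \le \E \sum_j Z_j$, and your two-case split at $t = 3\mu$ is precisely the routine algebra the paper leaves implicit in deriving the stated minimum. No gaps; the constants and the identification of the minimum in each regime all check out.
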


   \begin{lemma} \label{lemma:bernApp}
   	For each $u' \in \mathbb{S}'$ and width $w' \in \mathbb{W}'$, we have $$\Pr(N_{u',w'} \ge 20 w' nk) \le n^{- 4 k}.$$
   \end{lemma}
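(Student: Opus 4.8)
The plan is to apply Bernstein's inequality (Lemma \ref{lemma:bernstein}) to the indicator variables $Z_j := \ones(Y^j \in S_{u,w})$, so that $N_{u,w} = \sum_{j} Z_j$ is a sum of independent $\{0,1\}$-valued random variables. Writing $\mu := \E N_{u,w}$, Lemma \ref{lemma:volSlab} supplies the crucial upper bound $\mu \le M$ where $M := 2\sqrt{2}\, wn$. If $\mu = 0$ the claimed probability is trivially $0$, so I may assume $\mu \in (0, M]$. First I would set the threshold $A := 20 wnk$ and apply Lemma \ref{lemma:bernstein} with $t := A - \mu$; since $k \ge 1$ and $20 > 2\sqrt{2}$ we have $A > M \ge \mu$, so $t > 0$ and Bernstein yields
\begin{align*}
\Pr(N_{u,w} \ge A) \le \exp\bigg(-\min\Big\{\tfrac{(A-\mu)^2}{4\mu},\ \tfrac{3(A-\mu)}{4}\Big\}\bigg).
\end{align*}

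The one point requiring care is that the right-hand side still depends on the \emph{exact} expectation $\mu$, which we do not know; in particular $\mu$ sits in the denominator of the first term, so one cannot simply substitute the upper bound $M$. Instead I would observe that both $\mu \mapsto \frac{(A-\mu)^2}{4\mu}$ and $\mu \mapsto \frac{3(A-\mu)}{4}$ are decreasing on $(0, A)$ (a one-line derivative check for the former), so each is minimized over $\mu \in (0, M]$ at the endpoint $\mu = M$. This legitimately replaces $\mu$ by $M$ throughout and gives
\begin{align*}
\Pr(N_{u,w} \ge A) \le \exp\bigg(-\min\Big\{\tfrac{(A-M)^2}{4M},\ \tfrac{3(A-M)}{4}\Big\}\bigg).
\end{align*}

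To finish, I would plug in $A = 20 wnk$ and $M = 2\sqrt{2}\, wn$ and lower bound each term by $4k\log n$. Using $k \ge 1$ one gets $A - M = wn(20k - 2\sqrt{2}) \ge 17\, wnk$, and using the hypothesis $w \ge \frac{\log n}{n}$, i.e. $wn \ge \log n$, the second term is at least $\frac{3 \cdot 17\, wnk}{4} \ge \frac{51}{4} k \log n \ge 4k\log n$, while the first term is at least $\frac{(17 wnk)^2}{4 \cdot 2\sqrt{2}\, wn} = \frac{289}{8\sqrt{2}}\, wn k^2 \ge \frac{289}{8\sqrt{2}} k \log n \ge 4 k\log n$ (using $k^2 \ge k$ and $\frac{289}{8\sqrt 2} \approx 25$). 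Hence the exponent is at most $-4k\log n$, giving $\Pr(N_{u,w} \ge 20 wnk) \le \exp(-4k\log n) = n^{-4k}$, as claimed. The main (mild) obstacle is precisely this monotonicity step needed to convert the bound on $\E N_{u,w}$ into a usable tail bound; the remaining constant-chasing is routine, and the slack in the constants confirms that the choice of the constant $20$ in the statement is comfortably large enough.
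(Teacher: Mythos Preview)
Your proof is correct and follows essentially the same approach as the paper: apply Bernstein's inequality to the indicators $\ones(Y^j \in S_{u,w})$, use Lemma~\ref{lemma:volSlab} to bound $\mu$, and chase constants using $w \ge \frac{\log n}{n}$. The only cosmetic difference is in how the unknown $\mu$ is eliminated: the paper picks $t = 4\mu + \tfrac{16}{3}k\ln n$ so that the Bernstein exponent is $\mu$-free from the outset, whereas you take $t = A-\mu$ and then invoke monotonicity in $\mu$; both are equally valid.
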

   
   \begin{proof}
   	Let $\mu := \E N_{u',w'}$. Notice that $N_{u',w'}$ is the sum of the independent random variables that indicate for each $j$ whether $Y^j \in S_{u',w'}$. Then applying the previous lemma with $t = 4 \mu + \frac{16}{3} k \ln n$ we get
   	\begin{align}
   		&\Pr\bigg(N_{u',w'} \,\ge\, 5 \mu + \frac{16}{3} k \ln n\bigg) \label{eq:bernApp}\\
   		&\le\, \exp\bigg( - \min\bigg\{ \frac{16}{3} k \ln n~,~\frac{3\cdot  (16/3) k \ln n}{4} \bigg\}  \bigg)\notag\\
   		&\le n^{- 4 k}, \notag
   	\end{align}
   	where in the first inequality we used $t^2 \ge (4 \mu)\cdot (\frac{16}{3} k \ln n)$. From Lemma \ref{lemma:volSlab} we get $\mu \le 2 \sqrt{2} nw'$, and further using the assumption $w' \ge \frac{\log n}{n}$ (implied by $w' \in \mathbb{W}'$) we see that
   	\begin{align*}
   	20 w' nk ~\ge~ \bigg(10\sqrt{2} + \frac{16}{3}\bigg) w' nk ~\ge~ 5 \mu + \frac{16}{3} k \ln n,
   	\end{align*}
   	and hence inequality \eqref{eq:bernApp} upper bounds $\Pr(N_{u',w'} \ge 20 w' nk)$. This concludes the proof.    	
   \end{proof}
   
   To prove the theorem we need to show that with high probability we simultaneously have $N_{u,w} \le 60 n w k$ for all $u \in \mathbb{S}^{k-1}$ and $w \ge \frac{\log n}{n}$. We associate each slab $S_{u,w}$ (for $u \in \bS^{k-1}$ and $w \in [0,\sqrt{k}]$) to a ``discretized slab'' $S_{u',w'}$ by taking $u'$ as a vector in the  net $\bS'$ so that $\|u' - u\|_2 \le \e$ and taking $w' \in \mathbb{W}'$ so that $w' \in [w + \frac{\log n}{n}, w + \frac{2 \log n}{n}]$. 
	
	\begin{lemma}
	This association has the following properties: for every $u \in \bS^{k-1}$ and $w \in [0,\sqrt{k}]$
	
	\begin{enumerate}
		\item The intersected slab $S_{u,w} \cap [0,1]^k$ is contained in the associated intersected slab $S_{u',w'} \cap [0,1]^k$. In particular, in every scenario $N_{u,w} \le N_{u',w'}$
		
		\item If the width satisfies $w \ge \frac{\log n}{n}$, then $w' \le 3 w$. 
	\end{enumerate}
	\end{lemma}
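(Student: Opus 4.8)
The plan is to verify the two properties directly, with the only real content being a single Cauchy--Schwarz estimate for property 1. For the containment, I would fix an arbitrary point $y \in S_{u,w} \cap [0,1]^k$ and show it lands in the associated slab, i.e., that $|\ip{u'}{y}| \le w'$. The natural move is to write $\ip{u'}{y} = \ip{u}{y} + \ip{u'-u}{y}$ and control the perturbation term: by Cauchy--Schwarz,
\begin{align*}
	|\ip{u'-u}{y}| ~\le~ \|u'-u\|_2 \cdot \|y\|_2 ~\le~ \e \cdot \sqrt{k} ~=~ \frac{\log n}{n},
\end{align*}
where $\|u'-u\|_2 \le \e$ comes from the net and $\|y\|_2 \le \sqrt{k}$ is just the diameter bound for the unit cube. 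Combining with $|\ip{u}{y}| \le w$ (membership in $S_{u,w}$) gives $|\ip{u'}{y}| \le w + \frac{\log n}{n}$, and since the association chooses $w' \ge w + \frac{\log n}{n}$, this is at most $w'$, so $y \in S_{u',w'}$. As $y$ was already in the cube, it lies in $S_{u',w'}\cap[0,1]^k$, establishing the containment. The ``in particular'' statement $N_{u,w} \le N_{u',w'}$ then follows for free: each $Y^j$ lives in $[0,1]^k$, so $Y^j \in S_{u,w}$ forces $Y^j \in S_{u,w}\cap[0,1]^k \subseteq S_{u',w'}$, meaning every vector counted by $N_{u,w}$ is also counted by $N_{u',w'}$.

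Property 2 is purely arithmetic. The association gives $w' \le w + \frac{2\log n}{n}$ by construction, and the hypothesis $w \ge \frac{\log n}{n}$ yields $\frac{2\log n}{n} \le 2w$, so $w' \le w + 2w = 3w$.

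I do not expect a genuine obstacle here; the lemma is the ``interface'' step that lets the earlier per-slab tail bound (Lemma~\ref{lemma:bernApp}) be promoted to the uniform statement. The one point worth stating carefully is \emph{why} the net resolution was chosen to be $\e = \frac{\log n}{n\sqrt{k}}$ rather than something coarser: the $\sqrt{k}$ in its denominator is exactly what cancels the $\sqrt{k}$ cube-diameter factor, so that the perturbation error is precisely $\frac{\log n}{n}$, matching the slack deliberately built into the definition of $w'$. Making that cancellation explicit is the crux that makes both inclusions tight enough to be useful downstream.
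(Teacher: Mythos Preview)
Your argument is correct and essentially identical to the paper's: both fix $y \in S_{u,w}\cap[0,1]^k$, bound $|\ip{u'-u}{y}|$ by Cauchy--Schwarz using $\|y\|_2\le\sqrt{k}$ and $\|u'-u\|_2\le\e$, and conclude via $w' \ge w + \frac{\log n}{n}$; the paper simply calls property~2 ``immediate'' while you spell out the one-line arithmetic.
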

   
  \begin{proof}
  	The second property is immediate, so we only prove the first one. Take a point $y \in S_{u,w} \cap [0,1]^k$. By definition we have $\ip{y}{u} \in [-w,w]$, and also 
  	\begin{align*}
  		|\ip{y}{u'} - \ip{y}{u}| = |\ip{y}{u'-u}| &\le \|y\|_2\, \|u'-u\|_2 \\
  		&\le \sqrt{k} \e = \frac{\log n}{n},
  	\end{align*}
  	where the first inequality is Cauchy-Schwarz, and the second uses the fact that every vector in $[0,1]^k$ has Euclidean norm at most $\sqrt{k}$. Therefore $\ip{y}{u'} \in [-(w + \frac{\log n}{n}), w + \frac{\log n}{n}]$, and since the associated width satisfies $w' \ge w + \frac{\log n}{n}$ we see that $y$ belongs to $S_{u',w'} \cap [0,1]^k$. This concludes the proof. 
  \end{proof}

  \begin{proof}[Proof of Theorem \ref{thm:uniform}]
		We need to show
		\begin{align}
			\Pr\bigg[\bigvee_{u \in \bS^{k-1}, w \ge \frac{\log n}{n}} (N_{u,w} > 60 w n k) \bigg] \le \frac{1}{n}. \label{eq:uniform1}
		\end{align}
		Since there are only $n$ $Y^j$'s, we always have $N_{u,w} \le n$ and hence it suffices to consider $w \in [\frac{\log n}{n}, \frac{1}{60k}]$, in which case we can use the inequalities $N_{u,w} \le N_{u',w'}$ and $w' \le 3w$ from the previous lemma; in particular, the event $(N_{u,w} > 60 w n k)$ implies the event $(N_{u',w'} > 20 w' n k)$. Thus, using Lemma \ref{lemma:bernApp}
		\begin{align*}
			\text{LHS of } \eqref{eq:uniform1} ~&\le~ \Pr\bigg[\bigvee_{u' \in \bS', w' \in \mathbb{W}'} (N_{u',w'} > 20 w' n k) \bigg] \\
			&\le \sum_{u' \in \bS', w' \in \mathbb{W}'} \Pr(N_{u',w'} > 20 w' n k)\\
			&\le~ |\bS'|\,|\mathbb{W}'|\,n^{-4k}\\
			&\le~ \bigg(\frac{3 n \sqrt{k}}{\log n}\bigg)^{k+1} n^{-4k} ~\le~ \frac{1}{n},
		\end{align*}		
		where the last inequality uses the assumption $n \ge k$. 	This concludes the proof. 
  \end{proof}


	\section{Proof of Theorem \ref{thm:main}}\label{sec:prthm}

We can finally conclude the proof of Theorem \ref{thm:main}. 	To simplify the notation we use $O(\text{val})$ to denote $\cst \cdot \text{val}$ for some constant $\cst$. Because of Theorem \ref{thm:good}, we show that for a random instance $\I$ of $\IP$, with probability at least $1 - \frac{1}{n} - 2^{-\alpha a_2}$ the number of good points $G$ relative to this instance at most $n^{O(m + \alpha a_1 \log m)}$.
	
	For that, let $E$ be the event where all of the following hold:
	
	\begin{enumerate}
		\item Theorem \ref{thm:gap} holds for $\I$, namely $\gap(\I)$ is at most $\frac{\alpha a_1 \log^2 n}{n}$,
		\item The bound of Corollary \ref{cor:slabs_event} on the sizes of $J_{\text{rem}}$ and $J_\ell$ for all $\ell \in [\log n - 1]$, determined by the arrangement of random vectors $(c_j,A^j)$'s that comprise the columns of $\I$.
	\end{enumerate}
	By taking a union bound over these results we see that $E$ holds with probability at least $1 - \frac{1}{n} - 2^{-\alpha a_2}$. 
	
	From the first item in the definition of $E$, under $E$ we have that $C:= \frac{n}{\log n} \cdot \gap(\I)$ is at most $\alpha a_1 \log n$. Therefore, using the standard estimate $\binom{a}{\leq b} \le (4a/b)^b$ that holds for $a \ge 4b$, we get that, under $E$:
	\begin{align*}
		\prod_{\ell = \frac{\log a_1}{2}}^{\log C} \binom{|J_\ell|}{\le C/2^\ell} ~&\le~  \prod_{\ell = 1}^{\log C}  \bigg(\frac{O(m)\, 2^{2 \ell} \log n}{\alpha a_1 \log n} \bigg)^{C/2^\ell} \\
		&=~  \prod_{\ell = 1}^{\log C}  \bigg(\frac{O(m)\, 2^{2 \ell}}{\alpha a_1} \bigg)^{C/2^\ell} \\ 
		&~\le~ \bigg(\frac{O(m)}{\alpha a_1} \bigg)^{C \sum_{\ell \ge 1} \frac{1}{2^\ell}} \cdot 2^{2C \sum_{\ell \ge 1} \frac{\ell}{2^\ell}} \\
		&~\le~ \bigg(\frac{O(m)}{\alpha a_1} \bigg)^{O(C)},
	\end{align*} 
	where we started the product from $\ell = \frac{\log a_1}{2}$ to ensure we could apply the binomial estimate given only the assumption $\alpha \le 30m$; for the lower terms we can use the crude upper bound 
	\begin{align*}
		\prod_{\ell < \frac{\log a_1}{2}} \binom{|J_\ell|}{\le C/2^\ell} ~&\le~ \prod_{\ell < \frac{\log a_1}{2}} 2^{|J_\ell|}\\
		 ~&\le~ 2^{O(m a_1 \log a_1 \log n)} \\
		 &~=~ n^{O(m a_1 \log a_1)}.
	\end{align*}
	
	Plugging these bounds on Lemma \ref{lemma:counting}, we get that under $E$ the number of good points $G$ relative to the instance $\I$ is upper bounded as
	\begin{align*}
		|G| &\le 2^{O(m \log n)} \cdot n^{O(m a_1 \log a_1)}\cdot \bigg(\frac{O(m)}{\alpha a_1} \bigg)^{O(\alpha a_1 \log n)} \\
		&\le n^{O(m a_1 \log a_1 + \alpha a_1 \log m)}.
	\end{align*}
	Finally, plugging this bound on Theorem \ref{thm:good} we get that under $E$ the branch-and-bound tree for the instance $\I$ has at most 
	\begin{align*}
	&2\, n \cdot\, n^{O(m a_1 \log a_1 + \alpha a_1 \log m)} + 1\\
	&\qquad\qquad~~~\le~ n^{O(m a_1 \log a_1 + \alpha a_1 \log m)}
	\end{align*}
	nodes. This concludes the proof of Theorem \ref{thm:main}.
	

  \section{Final remarks} \label{sec:remark}
	
	We note that most of the above arguments hold not only for random problems but actually for \emph{every} 0/1 IP. In particular, Theorem \ref{thm:good} and Lemma \ref{lemma:counting} hold in such generality, which combined give the following.
	
	\begin{cor}\label{cor:final_rem}
		Consider any instance of $\text{IP}$ with arbitrary $A \in \R^{m \times n}$ and $b \in \R^m$. Then the tree of the best-bound branch-and-bound algorithm applied to this instance has at most 
		\begin{align*}
		2 \bigg( 2^{|J_{\text{rem}}|}\cdot \prod_{\ell = 1}^{\log C} \binom{|J_\ell|}{\le C/2^\ell}\bigg) n + 1
		\end{align*}
		nodes, where $C := \frac{n}{\log n} \cdot \gap$.
	\end{cor}	

	This shows that the effectiveness of branch-and-bound actually hold for every ``well-behaved'' 0/1 IP (or distributions that generate such IPs with good probability), where ``well-behaved'' means that the integrality gap must be small and there cannot be too many columns $(c_j,A^j)$ concentrated around a hyperplane (in order to control the terms $|J_\ell|$).

	
  \section*{Acknowledgments} 
  
We  are extremely grateful to Daniel Dadush for suggesting several improvements to the paper.

	    
	\bibliographystyle{plainnat}
	\bibliography{bib}


\end{document}